\DeclareMathSymbol{A}{\mathalpha}{operators}{`A}
\DeclareMathSymbol{B}{\mathalpha}{operators}{`B}
\DeclareMathSymbol{C}{\mathalpha}{operators}{`C}
\DeclareMathSymbol{D}{\mathalpha}{operators}{`D}
\DeclareMathSymbol{E}{\mathalpha}{operators}{`E}
\DeclareMathSymbol{F}{\mathalpha}{operators}{`F}
\DeclareMathSymbol{G}{\mathalpha}{operators}{`G}
\DeclareMathSymbol{H}{\mathalpha}{operators}{`H}
\DeclareMathSymbol{I}{\mathalpha}{operators}{`I}
\DeclareMathSymbol{J}{\mathalpha}{operators}{`J}
\DeclareMathSymbol{K}{\mathalpha}{operators}{`K}
\DeclareMathSymbol{L}{\mathalpha}{operators}{`L}
\DeclareMathSymbol{M}{\mathalpha}{operators}{`M}
\DeclareMathSymbol{N}{\mathalpha}{operators}{`N}
\DeclareMathSymbol{O}{\mathalpha}{operators}{`O}
\DeclareMathSymbol{P}{\mathalpha}{operators}{`P}
\DeclareMathSymbol{Q}{\mathalpha}{operators}{`Q}
\DeclareMathSymbol{R}{\mathalpha}{operators}{`R}
\DeclareMathSymbol{S}{\mathalpha}{operators}{`S}
\DeclareMathSymbol{T}{\mathalpha}{operators}{`T}
\DeclareMathSymbol{U}{\mathalpha}{operators}{`U}
\DeclareMathSymbol{V}{\mathalpha}{operators}{`V}
\DeclareMathSymbol{W}{\mathalpha}{operators}{`W}
\DeclareMathSymbol{X}{\mathalpha}{operators}{`X}
\DeclareMathSymbol{Y}{\mathalpha}{operators}{`Y}
\DeclareMathSymbol{Z}{\mathalpha}{operators}{`Z}
\newcommand{\cal}{\mathscr}
\newcommand{\op}{\operatorname}
\newcommand{\tn}{\textnormal}
\newcommand{\Vect}{\mathrm{Vect}}
\newcommand{\QCoh}{\mathrm{QCoh}}
\newcommand{\coend}{\mathrm{coend}}
\DeclareMathOperator*\colim{colim}
\newcommand{\Hom}{\op{Hom}}
\newcommand{\Ker}{\op{Ker}}
\newcommand{\Aut}{\op{Aut}}
\newcommand{\End}{\op{End}}
\newcommand{\Sym}{\op{Sym}}
\newcommand{\Spec}{\op{Spec}}
\newcommand{\GL}{\mathrm{GL}}
\newcommand{\Rad}{\mathrm{Rad}}
\newtheorem{thm}[subsubsection]{Theorem}
\newtheorem*{thm*}{Theorem}
\newtheorem*{cor*}{Corollary}
\newtheorem{prop}[subsubsection]{Proposition}
\newtheorem{lem}[subsubsection]{Lemma}
\newtheorem{cor}[subsubsection]{Corollary}
\theoremstyle{definition}
\newtheorem{rem}[subsubsection]{Remark}
\numberwithin{equation}{section}
\newtheorem{untitledsubsubsection}[subsubsection]{}
\newtheorem{untitledsubsection}[subsection]{}
\newenvironment{sect}
{\begin{untitledsubsection}\bf}
{\end{untitledsubsection}}
\newenvironment{void}
{\begin{untitledsubsubsection}}
{\end{untitledsubsubsection}}
\title{Tannakian reconstruction of reductive group schemes}
\author{Yifei Zhao}
\thanks{This project has received funding from the European Research Council (ERC) under the European Union's Horizon 2020 research and innovation programme (grant agreement No.~851146).}
\date{\today}
\email{yifei.zhao@uni-muenster.de}
\begin{document}

\begin{abstract}
We give sharp criteria for when a reductive group scheme satisfies Tannakian reconstruction. When the base scheme is Noetherian, we explicitly identify its Tannaka group scheme.
\end{abstract}

\maketitle



\section{Introduction}

This note contains some observations on the category of finite-rank representations of a reductive group scheme.

To be precise, let $S$ be an affine scheme and $G \rightarrow S$ be a flat affine group scheme. Let $\Vect(S)^G$ denote the category of $G$-equivariant vector bundles on $S$, i.e.~finite projective $\cal O_S$-modules equipped with an $\cal O_G$-comodule structure. It embeds in the category $\QCoh(S)^G$ of $G$-equivariant quasi-coherent sheaves on $S$.

Write $\omega : \Vect(S)^G \rightarrow \Vect(S)$ for the forgetful functor. The presheaf $\underline{\Aut}^{\otimes}(\omega)$ of symmetric monoidal automorphisms of $\omega$ receives a natural map from $G$:
\begin{equation}
\label{eq-tannakian-map}
	G \rightarrow \underline{\Aut}^{\otimes}(\omega).
\end{equation}
It is known that \eqref{eq-tannakian-map} is an isomorphism when $S$ is a Dedekind domain, by classical Tannakian reconstruction of Saavedra, Deligne, and Milne \cite{MR0338002}, \cite{MR654325}, \cite{MR1106898}.

For a general affine scheme $S$, the morphism \eqref{eq-tannakian-map} may fail to be an isomorphism. The purpose of this note is to understand the source of this failure in the case of a reductive group scheme.

\begin{sect}
Summary of results
\end{sect}

\begin{void}
For any affine scheme $S$ and reductive group scheme $G\rightarrow S$, our Theorem \ref{thm-reconstruction-criterion} asserts that the following conditions are equivalent:
\begin{enumerate}
	\item $G$ satisfies the \emph{strong resolution property}, i.e.~every object of $\QCoh(S)^G$ is a $G$-equivariant quotient of a direct sum of objects in $\Vect(S)^G$;
	\item $G$ satisifes \emph{Tannakian reconstruction}, i.e.~\eqref{eq-tannakian-map} is an isomorphism;
	\item $G$ is \emph{linear}, i.e.~it is a closed subgroup scheme of $\GL_{n, S}\rightarrow S$ for some $n\ge 0$;
	\item The radical torus $\Rad(G)$ is \emph{isotrivial}, i.e.~it splits over a finite \'etale cover of $S$.
\end{enumerate}
\end{void}

\begin{void}
The implications (1) $\Rightarrow$ (2) $\Rightarrow$ (3) are established in much greater generality by Sch\"appi \cite[Corollary 7.5.2]{MR3185459}, although we supply a direct proof in the case of flat affine group schemes. The implication (3) $\Rightarrow$ (1) is due to Thomason \cite[Theorem 2.18]{MR893468} when $S$ is Noetherian and we explain the redundancy of this hypothesis. The equivalence (3) $\Leftrightarrow$ (4) is due to Gille \cite{gille2021reductive}, which we do not reproduce.

In \cite[\S8.2]{MR3185459}, Sch\"appi poses the following question: does there exist a flat affine group scheme such that $\Vect(S)^G$ does not generate $\QCoh(S)^G$ as an abelian category? This property is formally equivalent to the strong resolution property, so Theorem \ref{thm-reconstruction-criterion} answers Sch\"appi's question in the affirmative and produces explicit examples.
\end{void}

\begin{void}
When $S$ is furthermore connected and Noetherian, we determine the Tannaka group scheme $\underline{\Aut}^{\otimes}(\omega)$ of $G$.

To state the answer, we observe that the torus $\Rad(G)$ has a maximal isotrivial quotient $\Rad(G)\twoheadrightarrow \Rad(G)^f$. Let $G^f$ be the push-out of $G$ along this map. Then $G^f$ is representable by a reductive group scheme. Our Theorem \ref{thm-identification-tannaka-group} constructs a canonical isomorphism:
\begin{equation}
\label{eq-isotrivialization-reductive}
G^f \cong \underline{\Aut}^{\otimes}(\omega)
\end{equation}
of affine group schemes under $G$.

This result can be seen as a refinement of the equivalence between the isotriviality of $\Rad(G)$ and the Tannakian reconstruction of $G$. To my knowledge, it is the first instance where it is possible to explicitly identify a Tannaka group scheme which possibly differs from the original group scheme.
\end{void}

\begin{void}
This paper is organized as follows. Section \ref{sect-reconstruction} proves the equivalence among criteria for Tannakian reconstruction of a reductive group scheme (Theorem \ref{thm-reconstruction-criterion}). Section \ref{sect-tannaka} identifies the Tannaka group $\underline{\Aut}^{\otimes}(\omega)$ in the Noetherian setting (Theorem \ref{thm-identification-tannaka-group}).
\end{void}

\begin{sect}
Acknowledgements
\end{sect}

I thank Aise Johan de Jong for organizing the Stacks Project Workshop in 2020 and for leading the learning group on Tannakian formalism. I thank K\c{e}stutis \v{C}esnavi\v{c}ius and the anonymous referee for suggesting many references.

An earlier version of the paper contains a result characterizing Tannakian categories associated to flat group schemes satisfying the strong resolution property. This result is removed since it is subsumed by the works of Sch\"appi \cite{schappi2012characterization} \cite{MR4170218}. I thank the anonymous referee for pointing out my oversight.

\medskip

\section{Criteria for reconstruction}
\label{sect-reconstruction}

Let $S = \Spec(R)$ be an affine scheme and $G\rightarrow S$ be a flat affine group scheme. Hom-sets in the category $\QCoh(S)^G$ are denoted by $\Hom_G(-,-)$. We view $\cal O_G$ as an object of $\QCoh(S)^G$ via the group operation.

The goal of this section is to prove the following statement.

\begin{thm}
\label{thm-reconstruction-criterion}
If $G \rightarrow S$ is reductive, the following are equivalent:
\begin{enumerate}
	\item $G$ satisfies the strong resolution property;
	\item $G$ satisfies Tannakian reconstruction;
	\item $G$ is linear;
	\item $\Rad(G)$ is isotrivial.
\end{enumerate}
\end{thm}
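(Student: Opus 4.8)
The plan is to prove the cycle of implications $(1)\Rightarrow(2)\Rightarrow(3)\Rightarrow(1)$ and to invoke Gille's theorem \cite{gille2021reductive} for the equivalence $(3)\Leftrightarrow(4)$, which I will not reprove. Throughout, reductivity of $G$ enters in only two places: to guarantee that $G\to S$ is of finite presentation (needed for $(2)\Rightarrow(3)$) and through Gille's result; the implications $(1)\Rightarrow(2)\Rightarrow(3)$ will in fact be established for an arbitrary flat affine group scheme.

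For $(3)\Rightarrow(1)$ I would follow Thomason \cite[Theorem 2.18]{MR893468}. Suppose $G\hookrightarrow \GL_{n,S}$ is a closed subgroup. The key point is that $\GL_{n,S}$ itself has the strong resolution property: the regular representation $\cal O_{\GL_{n,S}}$ is a filtered colimit of $\GL_n$-equivariant vector bundles (spanned by matrix coefficients of the standard representation and its dual), and for any $M\in\QCoh(S)^{\GL_n}$ the coaction embeds $M$ into $M\otimes_{\cal O_S}\cal O_{\GL_n}$, exhibiting $M$ as a quotient of a sum of equivariant vector bundles. Restriction along $\cal O_{\GL_n}\twoheadrightarrow\cal O_G$ then transports these resolutions to $G$-equivariant ones. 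To remove Thomason's Noetherian hypothesis I would first reduce an arbitrary $M\in\QCoh(S)^G$ to the finitely presented case, writing $M$ as a filtered colimit of finitely presented equivariant subobjects and assembling resolutions, and then descend the triple $(S,G,M)$ along a presentation $S=\lim_\alpha S_\alpha$ by Noetherian affine schemes with $G$ and $M$ already defined over some $S_\alpha$; applying Thomason over $S_\alpha$ and pulling back yields the required resolution over $S$.

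For $(1)\Rightarrow(2)$ I would run the standard regular-representation argument, now made available by the hypothesis that $\Vect(S)^G$ generates $\QCoh(S)^G$. A point of $\underline{\Aut}^{\otimes}(\omega)$ over an $R$-algebra $R'$ is a tensor-compatible family of automorphisms $\eta_V$ of $V\otimes_R R'$, natural in $V\in\Vect(S)^G$. Using the generation hypothesis, every object of $\QCoh(S)^G$ is the canonical colimit of the objects of $\Vect(S)^G$ mapping to it; applying this to the regular representation $\cal O_G$ with its algebra and comodule structure, naturality extends the family $\eta$ uniquely to an automorphism of $\cal O_G$ respecting both structures. Since a $G$-equivariant algebra automorphism of $\cal O_G$ is precisely a point $g\in G(R')$, this produces a two-sided inverse to the comparison map $G(R')\to\underline{\Aut}^{\otimes}(\omega)(R')$, proving that \eqref{eq-tannakian-map} is an isomorphism.

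The step I expect to be the main obstacle is $(2)\Rightarrow(3)$. Here I would write $\underline{\Aut}^{\otimes}(\omega)\cong \lim_{\cal C}\underline{\Aut}^{\otimes}(\omega|_{\cal C})$, where $\cal C$ ranges over the filtered poset of tensor subcategories of $\Vect(S)^G$ generated by finitely many objects and closed under duals and subquotients; this holds because $\Vect(S)^G=\colim_{\cal C}\cal C$ and natural transformations turn this colimit into a limit. Each finite stage $\underline{\Aut}^{\otimes}(\omega|_{\cal C})$ is a closed subgroup scheme of $\GL(V_{\cal C})$ for a single generating bundle $V_{\cal C}$, hence \emph{linear}, and the comparison map factors through each stage. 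If \eqref{eq-tannakian-map} is an isomorphism, then the finitely presented group scheme $G$ is identified with this cofiltered limit of affine schemes; a standard limit argument, using finite presentation of $G$ together with the fact that all transition maps are affine, then forces $G\to\underline{\Aut}^{\otimes}(\omega|_{\cal C})$ to already be an isomorphism for some finite $\cal C$, so that $G$ is linear. The delicate point I anticipate is precisely this descent of the isomorphism to a finite stage: it is where reductivity, via finite presentation, is indispensable, and is the crux of upgrading an a priori merely \emph{pro}-linear reconstruction to an honestly linear group.
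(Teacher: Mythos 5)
Your arguments for (1) $\Rightarrow$ (2) and (2) $\Rightarrow$ (3) are workable and close in spirit to the paper's (which packages both through the canonical colimit onto $\cal O_G$ and its identification with the coend, Proposition \ref{prop-canonical-colimit-interpretation}); note only that in (2) $\Rightarrow$ (3) you do not need, and probably cannot get, an isomorphism $G \cong \underline{\Aut}^{\otimes}(\omega|_{\cal C})$ at a finite stage: since $\cal O_G$ is of finite type over $R$ and $\cal O_G = \colim_{\cal C}\cal O_{\underline{\Aut}^{\otimes}(\omega|_{\cal C})}$ is filtered, some stage already maps \emph{onto} $\cal O_G$, i.e.\ $G$ is a closed subgroup of a linear group scheme, which is all linearity requires. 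The genuine gap is in (3) $\Rightarrow$ (1), at the sentence ``restriction along $\cal O_{\GL_n}\twoheadrightarrow\cal O_G$ then transports these resolutions to $G$-equivariant ones.'' Restriction along that Hopf surjection is the functor $\QCoh(S)^{\GL_n}\rightarrow\QCoh(S)^G$ (restriction of representations from $\GL_n$ to its subgroup $G$); it goes the wrong way. An arbitrary $\cal F\in\QCoh(S)^G$ is not the restriction of any $\GL_n$-equivariant sheaf, so there is no $\GL_n$-resolution of it to transport. The only thing restriction gives you is that $\cal O_G$, being a $G$-equivariant quotient of $\cal O_{\GL_n}=\colim_i V_i$, is a quotient of a direct sum of objects of $\Vect(S)^G$ --- but that is just linearity again (Proposition \ref{prop-canonical-colimit-interpretation}(c)), not the strong resolution property: the images of the $V_i$ in $\cal O_G$ are subcomodules that need not be projective $R$-modules, so the Adams/filtered-colimit property does not pass to $\cal O_G$. (Over a Dedekind base this is precisely where Serre's argument \cite{MR231831} uses that finitely generated torsion-free modules are projective; over a general base it breaks.)

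The correct route --- Thomason's, and the paper's --- is \emph{induction} rather than restriction: descent along $\GL_{n,S}\rightarrow\GL_{n,S}/G$ identifies $\QCoh(S)^G$ with $\QCoh(\GL_{n,S}/G)^{\GL_n}$, so it suffices that the pair $(\GL_{n,S},\GL_{n,S}/G)$ has the strong resolution property, and this needs a $\GL_n$-equivariant $S$-ample invertible sheaf on the quotient (Lemmas \ref{lem-inheritance-resolution} and \ref{lem-inheritance-resolution-subgroup}). This is exactly where reductivity of $G$ is indispensable: for reductive $G$ the quotient $\GL_{n,S}/G$ is an affine scheme \cite{MR3272912}, so $\cal O$ is equivariantly ample. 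Your claim that reductivity enters only through finite presentation and Gille's theorem should itself have been a red flag: since (1) $\Rightarrow$ (2) $\Rightarrow$ (3) holds for all flat affine group schemes, your version of (3) $\Rightarrow$ (1) would prove that linearity is equivalent to the strong resolution property for \emph{every} flat affine group scheme, which is not known and is the whole content of Thomason's hypotheses. Two smaller points: your argument that $\GL_{n,S}$ itself has the strong resolution property conflates sub and quotient (the coaction exhibits $M$ as a \emph{sub}object of $M\otimes_R\cal O_{\GL_n}$; the known deduction of resolutions from the Adams property uses the duals $V_i^{\vee}$ via the antipode, cf.\ Remark \ref{rem-strong-resolution-property} and its references), and your non-Noetherian reduction is unjustified, since over a non-Noetherian ring an object of $\QCoh(S)^G$ need not be a filtered colimit of finitely presented equivariant subobjects; the paper avoids limit arguments altogether by applying Lemma \ref{lem-inheritance-resolution} to $S\rightarrow\Spec(\mathbb Z)$ and quoting Serre over $\mathbb Z$.
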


The implications (1) $\Rightarrow$ (2) $\Rightarrow$ (3) are established in \S\ref{sect-general-implications}. The implication (3) $\Rightarrow$ (1) is the subject of \S\ref{sect-thomason-implication}. We quote \cite{gille2021reductive} for the equivalence (3) $\Leftrightarrow$ (4). Finally, we point out in Corollary \ref{cor-normal-domain} that these conditions are met when $S$ is a normal domain (not assumed Noetherian).

\begin{sect}
\label{sect-general-implications}
(1) $\Rightarrow$ (2) $\Rightarrow$ (3)
\end{sect}

\begin{void}
For any $\cal F\in\QCoh(S)^G$, consider the comma category $\Vect(S)^G_{/\cal F}$ of pairs $(\cal V, f)$ where $\cal V \in \Vect(S)^G$ and $f : \cal V\rightarrow\cal F$ is a morphism in $\QCoh(S)^G$. There is a canonical morphism:
\begin{equation}
\label{eq-canonical-colimit-expression}
L_{\cal F} : \colim_{(\cal V, f) \in \Vect(S)^G_{/\cal F}} \cal V \rightarrow \cal F.
\end{equation}

The implications (1) $\Rightarrow$ (2) $\Rightarrow$ (3) in Theorem \ref{thm-reconstruction-criterion} follow from the assertions below, which clarify the relationship among these conditions.
\end{void}

\begin{prop}
\label{prop-canonical-colimit-interpretation}
Let $G\rightarrow S$ be a flat affine group scheme. Then:
\begin{enumerate}[(a)]
	\item $G$ satisfies the strong resolution property if and only if $L_{\cal F}$ is bijective for all $\cal F\in\QCoh(S)^G$;
	\item $G$ satisfies Tannakian reconstruction if and only if $L_{\cal O_G}$ is bijective;
	\item when $G\rightarrow S$ is of finite type, $G$ is linear if and only if $L_{\cal O_G}$ is surjective.
\end{enumerate}
\end{prop}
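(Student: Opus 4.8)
The plan is to prove the three statements separately, in each case reducing the relevant property of $G$ to a concrete property of the comparison map $L_{\cal F}$. For (a), the easy direction is immediate: if every $L_{\cal F}$ is bijective then, since $\colim_{(\cal V,f)}\cal V$ is a quotient of $\bigoplus_{(\cal V,f)}\cal V$, the canonical map $\bigoplus_{(\cal V,f)}\cal V\rightarrow\cal F$ is surjective, exhibiting $\cal F$ as a $G$-equivariant quotient of a direct sum of objects of $\Vect(S)^G$; this is the strong resolution property. The substantive content is the converse, and in particular the injectivity of $L_{\cal F}$. Assuming the strong resolution property, I would first resolve $\cal F$ by two terms $\cal V_1\to\cal V_0\xrightarrow{\,p\,}\cal F$, with $\cal V_0,\cal V_1$ direct sums of objects of $\Vect(S)^G$ (apply the property to $\cal F$, then to $\ker p$). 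Writing $C:=\colim_{(\cal V,f)}\cal V$ with structure maps $\kappa_{(\cal V,f)}:\cal V\rightarrow C$, the key observation is that every object of $\Vect(S)^G$ is finitely generated, so the restriction of $d:\cal V_1\to\cal V_0$ to a summand $\cal B$ factors $G$-equivariantly through a finite subsum $\cal W\subseteq\cal V_0$, which again lies in $\Vect(S)^G$. Using the comma-category morphism $(\cal B,0)\rightarrow(\cal W,p|_{\cal W})$ together with the relation $\kappa_{(\cal B,0)}=0$ (obtained from $(\cal B,0)\rightarrow(0,0)$), one checks that the canonical map $\cal V_0\rightarrow C$ annihilates $d$, hence factors as $s\circ p$ for a unique $s:\cal F\rightarrow C$.

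I would then verify that $s$ is a two-sided inverse to $L_{\cal F}$. The identity $L_{\cal F}\circ s=\id_{\cal F}$ is immediate from $L_{\cal F}\circ\kappa_{(\cal V,f)}=f$ and the surjectivity of $p$. For $s\circ L_{\cal F}=\id_{C}$ it suffices to show $s\circ f=\kappa_{(\cal V,f)}$ for every $(\cal V,f)$; here I would form the fibre product $P:=\cal V\times_{\cal F}\cal V_0$ in $\QCoh(S)^G$, use the strong resolution property to choose an epimorphism $\cal U\twoheadrightarrow P$ from a direct sum of objects of $\Vect(S)^G$, and compare the two maps $\cal U\rightarrow C$ obtained by traversing the two sides of the square, once more reducing to comma-category compatibilities and the factorization of maps out of finitely generated bundles through finite subsums. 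Since the resulting map $\cal U\rightarrow\cal V$ is surjective, this yields $s\circ f=\kappa_{(\cal V,f)}$. I expect this injectivity argument---and specifically the bookkeeping that maps out of finitely generated equivariant bundles factor through finite subsums---to be the main obstacle in the whole proposition.

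For (b), I would identify the source of $L_{\cal O_G}$. By the $R$-linear co-Yoneda lemma the colimit over the comma category is the coend $\int^{\cal V\in\Vect(S)^G}\Hom_G(\cal V,\cal O_G)\otimes_R\cal V$, and the coinduction adjunction (the forgetful functor has right adjoint $-\otimes\cal O_G$) gives $\Hom_G(\cal V,\cal O_G)\cong\omega(\cal V)^\vee$. Since $\omega$ is exact, faithful and colimit-preserving, hence conservative, $L_{\cal O_G}$ is bijective if and only if $\omega(L_{\cal O_G})$ is, and $\omega(L_{\cal O_G})$ is the natural map $\int^{\cal V}\omega(\cal V)^\vee\otimes\omega(\cal V)\rightarrow\cal O_G$. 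Because $\Vect(S)^G$ is rigid (its objects are dualizable), classical Tannakian reconstruction identifies this coend, with its induced Hopf structure, with $\cal O(\underline{\Aut}^{\otimes}(\omega))$, and identifies the natural map with the one induced by \eqref{eq-tannakian-map}. Thus $L_{\cal O_G}$ is bijective precisely when \eqref{eq-tannakian-map} is an isomorphism, i.e.\ when $G$ satisfies Tannakian reconstruction.

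For (c), surjectivity of $L_{\cal O_G}$ says exactly that the images of the maps $f:\cal V\rightarrow\cal O_G$, for $\cal V\in\Vect(S)^G$, generate $\cal O_G$. If $G\hookrightarrow\GL_{n,S}$ is a closed subgroup then $\cal O_G$ is a quotient of $\cal O_{\GL_{n,S}}$, which is generated by the matrix coefficients of the standard representation together with the inverse determinant---all finite free equivariant bundles---so their images cover $\cal O_G$ and $L_{\cal O_G}$ is surjective (no finiteness hypothesis is needed here). Conversely, assuming $G\rightarrow S$ of finite type, so that $\cal O_G$ is a finitely generated $R$-algebra, surjectivity lets me place each of finitely many algebra generators in the image of a single equivariant bundle; bundling these together and, after adding a trivial representation to make the underlying module free, I obtain one $\cal V\in\Vect(S)^G$, free of some rank $n$, whose matrix coefficients generate $\cal O_G$. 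The associated homomorphism $G\rightarrow\GL_{n,S}$ then induces a surjection $\cal O_{\GL_{n,S}}\twoheadrightarrow\cal O_G$, hence a closed immersion, so $G$ is linear. The only delicate point here is that the finite-type hypothesis is exactly what converts the module-theoretic generation statement into a single faithful representation.
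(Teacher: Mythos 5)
Your proof is correct, and parts (b) and (c) run essentially parallel to the paper's argument: the paper likewise identifies $\colim_{(\cal V,f)}\cal V$ with $\coend(\omega^{\vee}\otimes_R\omega)$ (by comparing maps out of both sides into an arbitrary $\cal M\in\QCoh(S)$ via $\Hom_G(\cal V,\cal O_G)\cong\cal V^{\vee}$, where you instead invoke the co-Yoneda lemma together with the same coinduction adjunction), quotes the representability $\underline{\Aut}^{\otimes}(\omega)\cong\Spec(\coend(\omega^{\vee}\otimes_R\omega))$, and proves (c) by the same matrix-coefficient reasoning, with finite direct sums plus finite generation of $\cal O_G$ producing a single faithful $\cal V$ and hence a closed immersion into $\underline{\End}(\cal V)$.

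The genuine divergence is in (a). For injectivity the paper argues element-wise: since the index category admits finite direct sums (hence is sifted), any element of the colimit killed by $L_{\cal F}$ is represented by some $v\in\cal V$ with $f(v)=0$; flatness makes $\Ker(f)$ an object of $\QCoh(S)^G$, and a bundle $\cal V_1\in\Vect(S)^G$ mapping into $\Ker(f)$ with $v$ in its image yields a comma object $(\cal V_1,0)$ whose structure map to the colimit vanishes, so $v$ dies. You instead build an explicit two-sided inverse $s:\cal F\rightarrow C$ from a two-term resolution, plus a fibre-product argument for $s\circ L_{\cal F}=\id_C$. Both proofs rest on the same mechanism (flatness makes kernels and fibre products equivariant; equivariant maps into kernels give comma objects with zero structure maps), but your version is element-free and sidesteps the need to know that the colimit is computed on underlying sets, at the cost of the finite-subsum bookkeeping; the paper's is considerably shorter. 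One small point worth making explicit in your (c): passing from algebra generators of $\cal O_{\GL_{n,S}}$ to surjectivity of $L_{\cal O_G}$ uses that products of matrix coefficients are matrix coefficients of tensor powers, so that the sum of images of equivariant maps from bundles is an $R$-subalgebra of $\cal O_G$; the paper performs this same step by factoring the surjection $\Sym_R(\cal V^{\vee}\otimes_R\cal V)\twoheadrightarrow\cal O_G$ through the coend.
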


\begin{proof}[Proof of Proposition \ref{prop-canonical-colimit-interpretation}(a)]
Since every colimit in $\QCoh(S)^G$ is a quotient of a direct sum, bijectivity of $L_{\cal F}$ for all $\cal F\in\QCoh(S)^G$ implies the strong resolution property.

To prove the converse, we first observe that $L_{\cal F}$ is surjective under the hypothesis. It remains to prove that it is injective. Since the index category $\Vect(S)^G_{/\cal F}$ contains finite direct sums, it suffices to show that for an individual object $(\cal V, f)\in\Vect(S)^G_{/\cal F}$, an element $v\in\cal V$ with $f(v) = 0$ vanishes in the colimit.

Since $G\rightarrow S$ is flat, the $R$-submodule $\Ker(f)\subset\cal V$ inherits a $G$-module structure. The strong resolution property gives some $\cal V_1 \in \Vect(S)^G$ with a morphism $\cal V_1\rightarrow \Ker(f)$ whose image contains $v$. The composition $\cal V_1\rightarrow\cal V\rightarrow\cal F$ vanishes, showing that the map:
$$
\cal V_1 \rightarrow\colim_{(\cal V, f) \in \Vect(S)^G_{/\cal F}}(\cal V)
$$
is zero, so in particular, $v$ vanishes in the colimit.
\end{proof}

\begin{void}
Before proving assertion (b), we record an observation: for each $\cal V\in\Vect(S)^G$, there is a canonical isomorphism between the $R$-module of $G$-equivariant maps $\cal V \rightarrow \cal O_G$ and the $R$-linear dual of $\cal V$:
\begin{equation}
\label{eq-equivariant-maps-dual}
	\Hom_G(\cal V, \cal O_G) \cong \cal V^{\vee}.
\end{equation}

Indeed, this map is defined by composing $f : \cal V \rightarrow \cal O_G$ with the counit $\epsilon : \cal O_G \rightarrow R$. Its inverse is given by composing the coaction map $\cal V \rightarrow \cal V\otimes\cal O_G$ with a given $\varphi \in\cal V^{\vee}$.
\end{void}

\begin{void}
Let $\omega : \Vect(S)^G \rightarrow \Vect(S)$ denote the forgetful functor. For any affine $S$-scheme $S'$, write $\omega_{S'}$ for the composition of $\omega$ with the natural functor $\Vect(S)\rightarrow\Vect(S')$.

The presheaf $\underline{\Aut}^{\otimes}(\omega)$ sending an affine $S$-scheme $S' = \Spec(R')$ to the group of automorphisms of $\omega_{S'}$ as a symmetric monoidal functor is representable by an affine group scheme (see \cite[\S4]{MR1106898} or \cite[\S2]{MR2101076}):
$$
\underline{\Aut}^{\otimes}(\omega) \cong \Spec(\coend(\omega^{\vee}\otimes_R \omega)).
$$
Here, $\omega^{\vee}\otimes_R \omega$ denotes the functor:
$$
(\Vect(S)^G)^{\mathrm{op}}\times \Vect(S)^G \rightarrow \QCoh(S),\quad \cal V_1, \cal V_2\mapsto (\cal V_1)^{\vee}\otimes_R\cal V_2,
$$
and $\coend(\omega^{\vee}\otimes_R \omega)$ is equipped with a natural Hopf algebra structure in $\QCoh(S)$.

There is a canonical map:
\begin{equation}
\label{eq-tannakian-unit}
G \rightarrow \underline{\Aut}^{\otimes}(\omega),
\end{equation}
sending an $S'$-point of $G$ to its action on $\cal V\otimes_R R'$ for all $\cal V\in\Vect(S)^G$. The condition that $G$ satisfies Tannakian reconstruction translates to the assertion that \eqref{eq-tannakian-unit} is an isomorphism of affine group schemes over $S$.
\end{void}

\begin{proof}[Proof of Proposition \ref{prop-canonical-colimit-interpretation}(b)]
To each object $(\cal V, f) \in \Vect(S)^G_{/\cal O_G}$, one may functorially attach a map of $R$-modules $\cal V \rightarrow \cal V^{\vee}\otimes_R\cal V$, $v\mapsto f^{\vee}\otimes v$ where $f^{\vee}\in\cal V^{\vee}$ corresponds to $f$ under \eqref{eq-equivariant-maps-dual}. Composing with the tautological map $\cal V^{\vee}\otimes_R\cal V \rightarrow \coend(\omega^{\vee}\otimes_R \omega)$, we obtain a morphism $\cal V \rightarrow \coend(\omega^{\vee}\otimes_R\omega)$. 

This process defines a map:
\begin{equation}
\label{eq-colimit-expression-of-coend}
	\colim_{(\cal V, f) \in \Vect(S)^G_{/\cal O_G}} \cal V \rightarrow \coend(\omega^{\vee}\otimes_R \omega).
\end{equation}
which we shall prove to be bijective.

Indeed, for any $\cal M\in\QCoh(S)$, a morphism from the coend to $\cal M$ is an $R$-linear natural transformation $\cal V\rightarrow \cal V\otimes_R\cal M$, $\cal V\in\Vect(S)^G$. A morphism from the colimit to $\cal M$ is a compatible system of $R$-linear maps $\cal V\rightarrow\cal M$ for each $f : \cal V\rightarrow\cal O_G$ in $\QCoh(S)^G$. The bijection between them is given by \eqref{eq-equivariant-maps-dual}.

To conclude, we observe that the morphism $L_{\cal O_G}$ corresponds to the canonical map $\coend(\omega^{\vee}\otimes_R \omega) \rightarrow \cal O_G$ under the isomorphism \eqref{eq-colimit-expression-of-coend}.
\end{proof}

\begin{void}
We introduce some notations to be used in the proof of Proposition \ref{prop-canonical-colimit-interpretation}(c).

Let $\cal M \in \Vect(S)$. The presheaf on $S$ which sends an affine $S$-scheme $S' = \Spec(R')$ to the group (resp.~monoid) of $R'$-linear automorphisms (resp.~endomorphisms) of $\cal M\otimes_R R'$ is representable by an affine group scheme $\GL(\cal M)\rightarrow S$ (resp.~$\underline{\End}(\cal M)\rightarrow S$).

Linearity of $G$ is equivalent to the condition of admitting a closed immersion of group schemes $G\hookrightarrow \GL(\cal M)$ for some $\cal M\in\Vect(S)$, because there always exists $\cal M'\in\Vect(S)$ such that $\cal M\oplus\cal M'$ is free.
\end{void}

\begin{void}
Given $\cal M\in\Vect(S)$, the following data are equivalent:
\begin{enumerate}
	\item a $G$-equivariance structure on $\cal M$;
	\item a morphism of monoid schemes $G \rightarrow \underline{\End}(\cal M)$ over $S$.
\end{enumerate}
Indeed, a $G$-equivariance structure on $\cal M$ is encoded by a coaction map $\cal M^{\vee}\otimes_R\cal M \rightarrow\cal O_G$, or a map of $R$-coalgebras $\Sym_R(\cal M^{\vee}\otimes_R\cal M) \rightarrow\cal O_G$.

Since $G$ is a group, any morphism of monoid schemes $G \rightarrow \underline{\End}(\cal M)$ factors through the open subscheme $\GL(\cal M)\subset \underline{\End}(\cal M)$.
\end{void}

\begin{proof}[Proof of Proposition \ref{prop-canonical-colimit-interpretation}(c)]
We borrow the isomorphism \eqref{eq-colimit-expression-of-coend} from the previous proof. It suffices to show that $G$ is linear if and only if the canonical map corresponding to \eqref{eq-tannakian-unit}:
\begin{equation}
\label{eq-assembling-coaction-maps}
\coend(\omega^{\vee}\otimes_R \omega) \rightarrow \cal O_G
\end{equation}
is surjective.

If $G$ is linear, then there exists some $\cal V \in \Vect(S)^G$ such that the coaction map $\cal V^{\vee}\otimes_R\cal V\rightarrow \cal O_G$ induces a surjection $\Sym_R(\cal V^{\vee}\otimes_R\cal V) \rightarrow \cal O_G$. This surjection factors through \eqref{eq-assembling-coaction-maps}, implying that surjectivity of the latter.

Conversely, note that $\coend(\omega^{\vee}\otimes_R\omega)$ is a quotient of $\bigoplus_{\cal V\in\Vect(S)^G}(\cal V^{\vee}\otimes_R\cal V)$.  Since $\Vect(S)^G$ admits finite direct sums and $\cal O_G$ is a finite type $R$-algebra, there exists some $\cal V\in\Vect(S)^G$ such that the image of $\cal V^{\vee}\otimes_R\cal V \rightarrow\cal O_G$ contains a set of generators of $\cal O_G$. This defines a closed immersion of monoid schemes $G\rightarrow \underline{\End}(\cal M)$, so $G$ is linear.
\end{proof}

\begin{rem}
\label{rem-strong-resolution-property}
For a flat affine group scheme $G \rightarrow S$, the strong resolution property has two additional equivalent characterizations:
\begin{enumerate}
	\item $\Vect(S)^G$ generates $\QCoh(S)^G$ as an abelian category: any morphism $f$ in $\QCoh(S)^G$ annihilated by $\Hom_G(\cal V, -)$ for all $\cal V\in\Vect(S)^G$ is necessarily zero.
	\item $\cal O_G \in \QCoh(S)^G$ is a filtered colimit of objects which belong to $\Vect(S)^G$. (Such $\cal O_G$ is known as an ``Adams Hopf algebra''.)
\end{enumerate}
See \cite[\S1.4]{MR2066503} and \cite[\S6.1]{schappi2012characterization} for a proof of these equivalences.
\end{rem}

\begin{sect}
\label{sect-thomason-implication}
(3) $\Rightarrow$ (1)
\end{sect}

\begin{void}
Suppose that $X$ is an $S$-scheme equipped with a $G$-action. Let $\QCoh(X)^G$ denote the category of $G$-equivariant quasi-coherent sheaves over $X$, and $\Vect(X)^G\subset\QCoh(X)^G$ the full subcategory of $G$-equivariant vector bundles.

We say that the pair $(G, X)$ satisfies the \emph{strong resolution property} if for every $\cal F \in \QCoh(X)^G$, there exists a family of objects $\cal V_{\alpha}\in\Vect(X)^G$ (for $\alpha\in A$) together with a $G$-equivariant surjection $\bigoplus_{\alpha\in A}\cal V_{\alpha}\twoheadrightarrow\cal F$.

In particular, the strong resolution property of $G$ is equivalent to that of the pair $(G, S)$.
\end{void}

\begin{void}
For an invertible sheaf $\cal L$ on $X$, we use the notion of being \emph{$S$-ample} as defined in \cite[01VG]{stacks-project}.

Let $f : X\rightarrow S$ denote the structure map. The existence of an $S$-ample invertible sheaf on $X$ implies that $f$ is quasi-compact and separated (\cite[01VI]{stacks-project}). In particular, the functor $f_* : \QCoh(X) \rightarrow \QCoh(S)$ is well-defined in this situation.
\end{void}

\begin{lem}
\label{lem-inheritance-resolution}
Suppose that $G$ satisfies the strong resolution property. Given any $S$-scheme $X$ equipped with a $G$-action which admits a $G$-equivariant, $S$-ample invertible sheaf, the pair $(G, X)$ satisfies the strong resolution property.
\end{lem}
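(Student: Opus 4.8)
The plan is to produce the required equivariant resolution by pushing forward twists of $\cal F$ along $f$ and applying the strong resolution property of $G$ on the base $S$. Concretely, for each $n\ge 0$ set $M_n := f_*(\cal F\otimes\cal L^{\otimes n})\in\QCoh(S)$, and let $\theta_n : f^*M_n\otimes\cal L^{\otimes -n}\rightarrow\cal F$ be the counit of the adjunction $(f^*,f_*)$ applied to $\cal F\otimes\cal L^{\otimes n}$, tensored with $\cal L^{\otimes -n}$. The goal is a $G$-equivariant surjection onto $\cal F$ whose source is a direct sum of objects of the form $f^*\cal W\otimes\cal L^{\otimes -n}$ with $\cal W\in\Vect(S)^G$; since $\cal L$ is $G$-equivariant, each such object lies in $\Vect(X)^G$, so this suffices.

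First I would establish the \emph{non-equivariant} surjectivity of $\Theta := \bigoplus_{n\ge 0}\theta_n$. Because $X$ carries an $S$-ample invertible sheaf, $f$ is quasi-compact and separated, and as $S$ is affine, $X$ is quasi-compact and separated, hence quasi-separated; moreover $S$-ampleness of $\cal L$ coincides with ampleness of $\cal L$ on $X$ (\cite[01VH]{stacks-project}). On a quasi-compact quasi-separated scheme every quasi-coherent sheaf is the filtered colimit of its finite-type quasi-coherent subsheaves (\cite[01PG]{stacks-project}), so it suffices to see that each such subsheaf $\cal F'\subset\cal F$ lies in the image of some $\theta_n$. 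Ampleness produces an $n$ for which $\cal F'\otimes\cal L^{\otimes n}$ is globally generated, i.e.~the counit $f^*f_*(\cal F'\otimes\cal L^{\otimes n})\rightarrow\cal F'\otimes\cal L^{\otimes n}$ is surjective (\cite[01Q3]{stacks-project}); naturality of the counit along $\cal F'\hookrightarrow\cal F$ then places $\cal F'$ inside the image of $\theta_n$. Summing over $n$ yields surjectivity of $\Theta$. A single twist does not suffice: for infinitely generated $\cal F$ there need be no $n$ making $\cal F\otimes\cal L^{\otimes n}$ globally generated, which is precisely why the direct sum over all $n$ must be taken.

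It remains to upgrade $\Theta$ to an equivariant surjection. Since the $G$-action on $X$ is over $S$, the map $f$ is $G$-equivariant for the trivial action on $S$; using that $G\rightarrow S$ is flat, flat base change along the Cartesian squares built from the action and projection maps $G\times_S X\rightrightarrows X$ shows that $f_*$ carries $\QCoh(X)^G$ into $\QCoh(S)^G$ and that the counit is a morphism in $\QCoh(X)^G$. Consequently each $M_n$ lies in $\QCoh(S)^G$ and each $\theta_n$, hence $\Theta$, is $G$-equivariant. Applying the strong resolution property of $G$ to $M_n$ furnishes a $G$-equivariant surjection $\bigoplus_{\alpha}\cal W_{n,\alpha}\twoheadrightarrow M_n$ with $\cal W_{n,\alpha}\in\Vect(S)^G$; pulling back, tensoring with $\cal L^{\otimes -n}$, and composing with $\Theta$ produces the desired $G$-equivariant surjection $\bigoplus_{n,\alpha}f^*\cal W_{n,\alpha}\otimes\cal L^{\otimes -n}\twoheadrightarrow\cal F$, all of whose summands lie in $\Vect(X)^G$.

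The main obstacle is the interaction of the two generality features: arbitrary (possibly infinitely generated) quasi-coherent $\cal F$ and a non-Noetherian base. The first forces the summation over all twists $n$ rather than a uniform choice, handled via the finite-type approximation on the quasi-separated scheme $X$; the second is what makes the flatness of $G\rightarrow S$ essential, since it is flat base change that renders $f_*$ equivariant and thereby transports the base-level strong resolution property to $X$. Each point is routine once isolated, but together they explain why the statement requires only the ampleness hypothesis and the flatness of $G$, with no Noetherian assumption.
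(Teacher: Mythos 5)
Your proof is correct and takes essentially the same route as the paper: form the sum of twisted counit maps $\bigoplus_{k\ge 0}\cal L^{\otimes -k}\otimes f^*f_*(\cal F\otimes\cal L^{\otimes k})\twoheadrightarrow\cal F$, whose surjectivity the paper simply cites from the Stacks Project (tag 01Q3), and then resolve each $f_*(\cal F\otimes\cal L^{\otimes k})$ equivariantly using the strong resolution property of $G$ over $S$ before pulling back, twisting, and composing. The only differences are expository: you reprove the ampleness surjectivity by finite-type approximation and spell out the flat-base-change argument that makes $f_*$ land in $\QCoh(S)^G$ and the counit equivariant, points the paper asserts or leaves implicit.
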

\begin{proof}
Let $f : X\rightarrow S$ denote the structure map. Suppose $\cal F \in \QCoh(X)^G$. For each integer $k\ge 1$, the canonical morphism $f^*f_*(\cal F\otimes\cal L^{\otimes k}) \rightarrow \cal F\otimes \cal L^{\otimes k}$ is $G$-equivariant, where $f^*f_*(\cal F\otimes \cal L^{\otimes k})$ is equipped with the $G$-equivariance structure induced from that of $\cal F\otimes\cal L^{\otimes k}$.

Since $\cal L$ is $S$-ample, the induced map below is surjective (\cite[01Q3]{stacks-project}):
\begin{equation}
\label{eq-counit-morphism-sheaf}
\bigoplus_{k\ge 0} \cal L^{\otimes -k}\otimes f^*f_*(\cal F\otimes \cal L^{\otimes k}) \twoheadrightarrow \cal F.
\end{equation}
Because $G$ satisfies the strong resolution property, for each $k\ge 0$, there exists a family $\cal V_k^{(\alpha)} \in \tn{Vect}(S)^G$ (for $\alpha\in A_k$) with a surjection $\bigoplus_{\alpha\in A_k} \cal V_k^{(\alpha)} \twoheadrightarrow f_*(\cal F\otimes \cal L^{\otimes k})$. The composition:
$$
\bigoplus_{k\ge 0}\bigoplus_{\alpha\in A_k}\cal L^{\otimes -k}\otimes f^*\cal V_k^{(\alpha)} \twoheadrightarrow \bigoplus_{k\ge 0} \cal L^{\otimes -k}\otimes f^*f_*(\cal F\otimes\cal L^{\otimes k}) \twoheadrightarrow \cal F
$$
is the sought-for surjection from a sum of objects in $\tn{Vect}(X)^G$.
\end{proof}

\begin{lem}
\label{lem-inheritance-resolution-subgroup}
Suppose that $G$ is of finite presentation and satisfies the strong resolution property. Given a closed immersion $H\rightarrow G$ of flat affine group schemes such that $X:=G/H$ satisfies the hypothesis of Lemma \ref{lem-inheritance-resolution}, $H$ also satisfies the strong resolution property.
\end{lem}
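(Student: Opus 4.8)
The plan is to prove this by transporting the strong resolution property of the pair $(G, X)$ across an equivalence of categories that relates $G$-equivariant sheaves on $X = G/H$ to $H$-equivariant sheaves on $S$. The strong resolution property of $(G, X)$ is furnished by Lemma \ref{lem-inheritance-resolution}: by hypothesis $G$ satisfies the strong resolution property, and the phrase ``$X = G/H$ satisfies the hypothesis of Lemma \ref{lem-inheritance-resolution}'' means precisely that $X$ carries a $G$-equivariant, $S$-ample invertible sheaf, so that lemma applies verbatim.

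The heart of the argument is to construct an equivalence of abelian categories
$$
\QCoh(X)^G \xrightarrow{\ \sim\ } \QCoh(S)^H
$$
that carries the full subcategory $\Vect(X)^G$ onto $\Vect(S)^H$. The conceptual source is the identification of quotient stacks $[X/G] \cong BH$: since $G$ acts transitively on $X = G/H$ with stabilizer $H$ at the base point $e\in X(S)$, the action groupoid of $G$ on $X$ is equivalent to the classifying groupoid of $H$. Concretely, the equivalence sends a $G$-equivariant sheaf on $X$ to its restriction along $e : S \to X$, which inherits an $H$-equivariance structure from the stabilizer; the quasi-inverse is the induction functor $V \mapsto G\times^{H} V$. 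To make this rigorous I would invoke faithfully flat descent along the $H$-torsor $G \to X$: the projection $G \to X$ is an fppf $H$-torsor (the finite presentation of $G$, together with flatness of $H$, guarantees that the quotient $X$ and the torsor are well-behaved enough for descent), and descent identifies sheaves on $[X/G]$ with $\cal O_H$-comodules on $S$, that is, with $\QCoh(S)^H$. Restriction to $e$ sends a $G$-equivariant vector bundle to an $H$-equivariant vector bundle, while induction sends a finite-rank $H$-representation to a $G$-equivariant vector bundle on $X$ of the same rank, so the equivalence preserves the subcategories of vector bundles.

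Once this equivalence is in place the remainder is formal. Being an equivalence of abelian categories, it is automatically exact—hence preserves surjections—and preserves arbitrary direct sums. It therefore carries a $G$-equivariant surjection $\bigoplus_\alpha \cal V_\alpha \twoheadrightarrow \cal F$ with $\cal V_\alpha \in \Vect(X)^G$ onto an $H$-equivariant surjection from a direct sum of objects of $\Vect(S)^H$. Transporting the strong resolution property of $(G, X)$ across the equivalence thus yields the strong resolution property of the pair $(H, S)$, which is by definition the strong resolution property of $H$.

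The main obstacle is the careful construction and verification of the equivalence in the second step: establishing that $G \to G/H$ is genuinely an fppf $H$-torsor, that descent along it identifies $\QCoh([X/G])$ with $\QCoh(S)^H$, and that this identification matches up the finite-rank subcategories on both sides. Everything downstream is the routine transport of a categorical property along an exact, cocontinuous equivalence.
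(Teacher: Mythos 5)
Your proposal is correct and follows essentially the same route as the paper: first invoke Lemma \ref{lem-inheritance-resolution} to obtain the strong resolution property of the pair $(G, X)$, then use fppf descent along the $H$-torsor $G \rightarrow G/H$ to identify $\QCoh(G/H)^G$ with $\QCoh(S)^H$ compatibly with the vector-bundle subcategories, and finally transport the property across this exact equivalence. The paper simply cites the descent equivalence where you sketch its construction via $[X/G]\cong [S/H]$, restriction along the identity coset, and induction, but the content is the same.
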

\begin{proof}
The pair $(G, G/H)$ satisfies the strong resolution property by Lemma \ref{lem-inheritance-resolution}. Since $G\rightarrow G/H$ is faithfully flat and of finite presentation, the same holds for $G/H\rightarrow S$.

We have a commutative diagram of categories:
$$
\begin{tikzcd}
	\Vect(G/H)^G \ar[r, "\cong"]\ar[d] & \Vect(S)^H \ar[d]\\
	\QCoh(G/H)^G \ar[r, "\cong"] & \QCoh(S)^H
\end{tikzcd}
$$
where the horizontal functors are equivalences (fppf descent) and the vertical functors are fully faithful. The strong resolution property of $(G, G/H)$ thus implies that of $(H, S)$.
\end{proof}

\begin{void}
Recall that an affine group scheme $G\rightarrow S$ is \emph{reductive} if it is smooth with geometric fibers being connected reductive.

If $G\rightarrow S$ is reductive, then for any closed immersion of affine group schemes $G\hookrightarrow \GL_{n, S}$ over $S$, the quotient $\GL_{n, S}/G$ is representable by an affine $S$-schemes. This follows from \cite[Theorem 9.4.1 \& 9.7.5]{MR3272912}.
\end{void}

\begin{prop}[Thomason]
Suppose that $G \rightarrow S$ is reductive. If $G$ is linear, then it satisfies the strong resolution property.
\end{prop}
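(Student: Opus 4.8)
The plan is to reduce the assertion to the single group $\GL_{n,S}$, verify the strong resolution property there by an explicit computation, and then transport it along the closed immersion $G \hookrightarrow \GL_{n,S}$ supplied by linearity. The transport is exactly what Lemma \ref{lem-inheritance-resolution-subgroup} provides, applied with $\GL_{n,S}$ as the ambient group and $G$ as the subgroup. To invoke it I must check two things: first, that $\GL_{n,S}$ is of finite presentation and satisfies the strong resolution property; second, that the homogeneous space $X := \GL_{n,S}/G$ admits a $\GL_{n,S}$-equivariant, $S$-ample invertible sheaf, so that the hypothesis of Lemma \ref{lem-inheritance-resolution} holds.

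For the base case I would show that $\cal O_{\GL_{n,S}}$ is an \emph{Adams Hopf algebra}, i.e.\ a filtered colimit of $\GL_{n,S}$-equivariant vector bundles, and then quote Remark \ref{rem-strong-resolution-property}(2). Writing $\cal O_{\GL_{n,S}} = R[x_{ij}][\det^{-1}]$ with the left regular comodule structure $\Delta(x_{ij}) = \sum_k x_{ik}\otimes x_{kj}$, the coproduct $\Delta$ is a graded algebra map for the grading on the left tensor factor, so each truncation $\Sym_R^{\le d}(W)$ of the polynomial subalgebra $R[x_{ij}] = \Sym_R(W)$ (with $W$ the free module on the $x_{ij}$) is a finite free subcomodule. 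Since $\det$ is grouplike, each $\det^{-k}\Sym_R^{\le d}(W)$ is again a finite free subcomodule, and these exhaust $\cal O_{\GL_{n,S}}$ along a directed family (using that $\det^{k'-k}\Sym_R^{\le d}(W)\subseteq \Sym_R^{\le d+n(k'-k)}(W)$). This exhibits $\cal O_{\GL_{n,S}}$ as the required filtered colimit. As $\GL_{n,S}$ is manifestly of finite presentation over $S$, the first condition is met, and nothing here uses a Noetherian hypothesis.

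For the second condition I would use the representability result recorded above: because $G$ is reductive and $G \hookrightarrow \GL_{n,S}$ is a closed immersion, the quotient $X = \GL_{n,S}/G$ is representable by an affine $S$-scheme \cite{MR3272912}. For an affine morphism $X \to S$ the structure sheaf $\cal O_X$ is $S$-ample, and it carries the tautological $\GL_{n,S}$-equivariant structure; hence $X$ satisfies the hypothesis of Lemma \ref{lem-inheritance-resolution}. With both conditions verified, Lemma \ref{lem-inheritance-resolution-subgroup} applied to $G \hookrightarrow \GL_{n,S}$ yields the strong resolution property for $G$.

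The main obstacle is the affineness of $X = \GL_{n,S}/G$, which is precisely where reductivity of $G$ is used; I would cite \cite{MR3272912} for it rather than reprove it. The only remaining subtlety is bookkeeping: confirming that each finiteness hypothesis in Lemmas \ref{lem-inheritance-resolution} and \ref{lem-inheritance-resolution-subgroup} is met at the level of finite presentation rather than finite type, which is exactly what dispenses with the Noetherian assumption in Thomason's original argument. The $S$-ampleness of $\cal O_X$ and the equivariance of the whole construction are routine.
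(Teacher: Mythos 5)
Your proof is correct, and its skeleton coincides with the paper's: both arguments reduce to the ambient group $\GL_{n,S}$ via Lemma \ref{lem-inheritance-resolution-subgroup}, using the affineness of $\GL_{n,S}/G$ from \cite{MR3272912} to produce the required equivariant $S$-ample invertible sheaf $\cal O_X$ --- a point the paper leaves implicit and you rightly spell out. Where you genuinely diverge is the base case for $\GL_{n,S}$. The paper handles it by a second application of Lemma \ref{lem-inheritance-resolution}, this time to $S \rightarrow \Spec(\mathbb Z)$ with its trivially equivariant ample sheaf $\cal O_S$, thereby descending to $\GL_{n,\mathbb Z}$, where Serre's theorem \cite{MR231831} (any flat affine group scheme over a Dedekind domain has the strong resolution property) finishes the job. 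You instead verify directly that $\cal O_{\GL_{n,S}}$ is an Adams Hopf algebra via the filtration $\det^{-k}\Sym_R^{\le d}(W)$ and then quote Remark \ref{rem-strong-resolution-property}(2). Your filtration argument checks out: $W$ is a subcomodule because $\Delta(x_{ij}) = \sum_k x_{ik}\otimes x_{kj}$, the grading and grouplikeness of $\det$ make each $\det^{-k}\Sym_R^{\le d}(W)$ a finite free subcomodule, and the directedness and exhaustion claims are as you state. The trade-off between the two routes: the paper's is shorter, reuses its own Lemma \ref{lem-inheritance-resolution}, and rests on a classical Dedekind-domain argument (finite type plus torsion-free implies projective); yours is explicit, uniform over an arbitrary base (so the absence of any Noetherian hypothesis is transparent), but the implication ``Adams $\Rightarrow$ strong resolution property'' that you extract from Remark \ref{rem-strong-resolution-property} is itself a nontrivial theorem of Hovey and Sch\"appi, so the hard content of the base case is still outsourced --- just to a different reference. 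Either way, the proposition follows.
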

\begin{proof}
Lemma \ref{lem-inheritance-resolution-subgroup} reduces the problem to showing that $\GL_{n, S}$ satisfies the strong resolution property.

By Lemma \ref{lem-inheritance-resolution} applied to the morphism $S\rightarrow \Spec(\mathbb Z)$, it suffices to show that $\GL_{n, \Spec(\mathbb Z)}$ satisfies the strong resolution property. Since $\mathbb Z$ is a Dedekind domain, any flat affine group scheme over it satisfies the strong resolution property (\cite[Proposition 2 \& 3]{MR231831}).
\end{proof}

\begin{sect}
Additional remarks
\end{sect}

\begin{void}
\label{void-subgroups}
Suppose that $G$ is reductive and satisfies the equivalent conditions of Theorem \ref{thm-reconstruction-criterion}. Then any parabolic subgroup $P\subset G$ as well as the unipotent radical $N_P\subset P$ also satisfy the strong resolution property. Indeed, this follows from Lemma \ref{lem-inheritance-resolution-subgroup}.
\end{void}

\begin{cor}
\label{cor-normal-domain}
If $S$ is the spectrum of a normal domain, then any reductive group scheme $G \rightarrow S$, as well as its parabolic subgroups and their unipotent radicals, satisfy Tannakian reconstruction.
\end{cor}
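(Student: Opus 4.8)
The plan is to verify condition (4) of Theorem \ref{thm-reconstruction-criterion} for $G$ itself — that the radical torus $\Rad(G)$ is isotrivial — and then let the theorem upgrade this to the strong resolution property and to Tannakian reconstruction. Note that $S$ is connected, being the spectrum of a domain. For the parabolic subgroups $P\subset G$ and their unipotent radicals $N_P$ I would not argue directly: once $G$ satisfies the equivalent conditions, \ref{void-subgroups} already records that $P$ and $N_P$ satisfy the strong resolution property via Lemma \ref{lem-inheritance-resolution-subgroup}. Since $P$ and $N_P$ are smooth affine, hence flat affine, and since the implication (1) $\Rightarrow$ (2) holds for arbitrary flat affine group schemes — bijectivity of $L_{\cal F}$ for all $\cal F$ (Proposition \ref{prop-canonical-colimit-interpretation}(a)) forces bijectivity of $L_{\cal O_G}$ (Proposition \ref{prop-canonical-colimit-interpretation}(b)) — this yields Tannakian reconstruction for $P$ and $N_P$ as well, even though they are not reductive.

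Everything thus reduces to the assertion that $\Rad(G)$ is isotrivial over the spectrum of any normal domain $R$. When $R$ is moreover Noetherian this is the classical isotriviality of tori over a normal connected Noetherian base (see \cite{MR3272912} or \cite{gille2021reductive}). The real content is the removal of the Noetherian hypothesis, for which I would spread out. As $G\rightarrow S$ is reductive, hence of finite presentation, it descends to a reductive group scheme $G_0$ over $\Spec(R_0)$ for some finitely generated $\mathbb Z$-subalgebra $R_0\subset R$; being a subring of $R$, the ring $R_0$ is a domain.

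I would then replace $R_0$ by its normalization $\tilde R_0$ in $\op{Frac}(R_0)$. Because finitely generated $\mathbb Z$-algebras are Nagata, $\tilde R_0$ is finite over $R_0$, hence Noetherian and normal. Moreover every element of $\tilde R_0$ is integral over $R_0\subset R$ and lies in $\op{Frac}(R_0)\subset\op{Frac}(R)$, so normality of $R$ forces $\tilde R_0\subset R$. Base-changing $G_0$ along $R_0\rightarrow\tilde R_0$ produces a reductive group scheme $\tilde G_0$ over the normal connected Noetherian domain $\tilde R_0$ whose pullback to $R$ is $G$. The Noetherian case supplies a finite \'etale cover of $\Spec(\tilde R_0)$ splitting $\Rad(\tilde G_0)$; since formation of the radical torus commutes with base change, the induced finite \'etale cover of $S$ splits $\Rad(G)$, which is the desired isotriviality.

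The main obstacle is precisely this passage past the Noetherian hypothesis: the argument hinges on controlling the normalization inside the colimit, so the Nagata property of finitely generated $\mathbb Z$-algebras (guaranteeing $\tilde R_0$ is finite, Noetherian, and still contained in $R$) is the crucial input. A secondary point worth spelling out is that the radical torus commutes with the base change $\tilde R_0\rightarrow R$, so that a cover splitting $\Rad(\tilde G_0)$ genuinely splits $\Rad(G)$; this is a standard feature of reductive group schemes over an arbitrary base, but it is what makes the descent-and-pullback strategy legitimate.
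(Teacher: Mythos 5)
Your proposal is correct, and its skeleton is exactly the paper's: verify a condition of Theorem \ref{thm-reconstruction-criterion} for $G$ itself, then handle the parabolic subgroups $P$ and unipotent radicals $N_P$ by combining \ref{void-subgroups} (strong resolution property via Lemma \ref{lem-inheritance-resolution-subgroup}) with the implication (1) $\Rightarrow$ (2), which holds for arbitrary flat affine group schemes --- this last step is verbatim what the paper does. The one divergence is how the input for $G$ is obtained: the paper simply cites \cite[Lemma 2.2]{guo2020grothendieck} for the relevant condition over a (possibly non-Noetherian) normal domain, whereas you prove the isotriviality of $\Rad(G)$ directly. Your argument for that step is sound: spreading out a reductive group scheme to a finitely generated $\mathbb Z$-subalgebra $R_0\subset R$ is standard, the Nagata property of $R_0$ makes the normalization $\tilde R_0$ module-finite hence Noetherian, normality of $R$ forces $\tilde R_0\subset R$, the classical isotriviality theorem (SGA3-type, over a normal connected Noetherian base) applies to $\Rad(\tilde G_0)$, and the compatibility of the radical with base change transports the splitting cover to $S$. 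In effect you have inlined a proof of the external lemma the paper outsources; what this buys is a self-contained argument whose only black boxes are the limit formalism for reductive group schemes and the Noetherian isotriviality theorem, at the cost of a longer proof than the paper's two-line citation.
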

\begin{proof}
Combine Theorem \ref{thm-reconstruction-criterion} with \cite[Lemma 2.2]{guo2020grothendieck}. For the statements on subgroups of $G$, we invoke the implication (1) $\Rightarrow$ (2) of Theorem \ref{thm-reconstruction-criterion}, which does not require the reductive hypothesis.
\end{proof}

\begin{rem}
Wedhorn \cite[\S5.17]{MR2101076} asserts that every flat affine group scheme over a valuation ring satisfies Tannakian reconstruction, but the proof contains a gap in \S5.6 of \emph{op.cit.}. This result gives a positive answer for reductive group schemes and their special subgroups.
\end{rem}

\medskip

\section{The Tannaka group scheme}
\label{sect-tannaka}

We assume that $S$ is an affine connected Noetherian scheme. This hypothesis guarantees that \'etale coverings of $S$ are locally Noetherian, so their connected components are open.

We study the maximal isotrivial quotient of tori in \S\ref{sect-isotrivialization}. Then we apply it to the radical torus of a reductive group schemes $G\rightarrow S$ to determine its Tannaka group scheme.

\begin{sect}
\label{sect-isotrivialization}
Maximal isotrivial quotients
\end{sect}

\begin{void}
Fix a geometric point $\bar s \rightarrow S$. Let $\Pi_1(S, \bar s)$ denote the ``pro-groupe fondamental \'elargi'' of \cite[X, \S10.6]{SGA3}. It pro-represents the functor sending an abstract group $\Gamma$ to the set of \'etale $\Gamma$-torsors rigidified along $\bar s$.

It follows from \cite[X, Th\'eor\`eme 7.1]{SGA3} that the functor $T \mapsto \Hom(T_{\bar s}, \mathbb G_{m, \bar s})$ defines an equivalence of categories between tori on $S$ and finite free $\mathbb Z$-modules equipped with a ``continuous'' $\Pi_1(S, \bar s)$-action, i.e.~one which factors through a group.

Under this equivalence, a torus $T$ is isotrivial if and only if the corresponding $\Pi_1(S, \bar s)$-action on $\Lambda := \Hom(T_{\bar s}, \mathbb G_{m,\bar s})$ factors through a finite group.
\end{void}

\begin{void}
Let $T \rightarrow S$ be a torus with associated $\Pi_1(S, \bar s)$-module $\Lambda$. Denote by $\Lambda^f\subset\Lambda$ the subset of elements whose $\Pi_1(S, \bar s)$-orbit is finite. Then $\Lambda^f\subset\Lambda$ is a $\mathbb Z$-submodule and $\Lambda/\Lambda^f$ is torsion-free. In particular, it induces a surjection of tori over $S$:
\begin{equation}
\label{eq-isotrivialization}
	T \twoheadrightarrow T^f.
\end{equation}

The torus $T^f$ is isotrivial and the morphism \eqref{eq-isotrivialization} is the universal morphism from $T$ to an isotrivial torus over $S$: it is the ``maximal isotrivial quotient'' of $T$.
\end{void}

\begin{rem}
Applying the same construction to $\check{\Lambda} := \Hom(\mathbb G_{m, \bar s}, T_{\bar s})$ also defines the ``maximal isotrivial subtorus'' of $T$.
\end{rem}

\begin{lem}
\label{lem-representation-isotrivialization}
Pulling back along \eqref{eq-isotrivialization} defines an equivalence of categories:
\begin{equation}
\label{eq-representation-isotrivialization}
	\Vect(S)^{T^f} \cong \Vect(S)^T.
\end{equation}
\end{lem}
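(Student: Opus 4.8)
The plan is to reinterpret the functor \eqref{eq-representation-isotrivialization} as an \emph{inflation} functor on comodule categories and to show that every finite-rank $T$-representation is inflated from $T^f$. Writing $\bar s = \Spec \bar k$, the surjection \eqref{eq-isotrivialization} corresponds, under the dictionary of the preceding paragraphs, to the inclusion of character lattices $\Lambda^f\hookrightarrow\Lambda$, hence to an injection of Hopf algebras $\iota:\cal O_{T^f}\hookrightarrow\cal O_T$. Decomposing $\bar k[\Lambda]=\bigoplus_{O}\bar k[O]$ over the $\Pi_1(S,\bar s)$-orbits $O\subset\Lambda$ and descending along this $\Pi_1$-stable decomposition, one obtains $\cal O_T=\bigoplus_{O}\cal A_O$, where $\cal A_O$ is finite locally free of rank $|O|$ when $O$ is finite; under this decomposition $\iota$ identifies $\cal O_{T^f}$ with the summand $\bigoplus_{O\text{ finite}}\cal A_O$. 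Pulling back along \eqref{eq-isotrivialization} is then the functor sending an $\cal O_{T^f}$-comodule $(\cal V,\delta)$ to the $\cal O_T$-comodule $(\cal V,(\id\otimes\iota)\circ\delta)$, and it suffices to prove that this functor is fully faithful and essentially surjective.

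Full faithfulness is immediate: since $\iota$ is a split injection of $\cal O_S$-modules and each object of $\Vect(S)^{T^f}$ is flat, the map $\id_{\cal W}\otimes\iota:\cal W\otimes\cal O_{T^f}\to\cal W\otimes\cal O_T$ is injective, so a morphism $\cal V\to\cal W$ intertwines the two $\cal O_T$-coactions if and only if it intertwines the two $\cal O_{T^f}$-coactions.

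For essential surjectivity I must show that for every $\cal V\in\Vect(S)^T$ the coaction $\delta:\cal V\to\cal V\otimes\cal O_T$ factors through $\cal V\otimes\cal O_{T^f}$; the induced map is then automatically a $T^f$-coaction (the coalgebra axioms are inherited because $\iota$ is a coalgebra map and $\id\otimes\iota$ is injective) inflating to the original one. I would consider the \emph{coefficient coalgebra} $\mathrm{cf}(\cal V)\subset\cal O_T$, the image of the matrix-coefficient map $\cal V^{\vee}\otimes_R\cal V\to\cal O_T$, $\varphi\otimes v\mapsto(\varphi\otimes\id)\delta(v)$; it is a subcoalgebra through which $\delta$ factors, and it is of finite type because $\cal V$ has finite rank. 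It remains to prove $\mathrm{cf}(\cal V)\subseteq\cal O_{T^f}$. Equivalently, writing $W\subset\Lambda$ for the finite set of weights of the split representation $\cal V_{\bar s}$ of $T_{\bar s}$, so that $\mathrm{cf}(\cal V)_{\bar s}=\bigoplus_{\lambda\in W}\bar k\,e_{\lambda}$, I must show $W\subseteq\Lambda^f$; since $W$ is finite this amounts to showing that $W$ is stable under $\Pi_1(S,\bar s)$, as a finite $\Pi_1$-stable set consists of finite orbits.

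The $\Pi_1$-stability of $W$ is the heart of the matter and the step I expect to be the main obstacle, precisely because $\cal V$ carries no local system structure, so $\Pi_1(S,\bar s)$ does not act on the fibre $\cal V_{\bar s}$ directly. I would deduce it from the fact that $\mathrm{cf}(\cal V)$ is a subcoalgebra \emph{defined over $S$}: under the Cartier-dual form of the SGA3 dictionary, subcoalgebras of $\cal O_T$ correspond to closed subschemes of the étale $S$-scheme with geometric fibre $\Lambda$, hence to $\Pi_1$-stable subsets of $\Lambda$, and a subcoalgebra of finite type to a finite such subset. Concretely, intersecting $\mathrm{cf}(\cal V)$ with the orbit decomposition gives $\mathrm{cf}(\cal V)=\bigoplus_{O}\cal C_O$ with $\cal C_O\subset\cal A_O$ a subcoalgebra and $(\cal C_O)_{\bar s}=\bigoplus_{\lambda\in W\cap O}\bar k\,e_\lambda$; for an infinite orbit $O$ the sheaf $\cal A_O$ admits no nonzero finite-type subcoalgebra, since a nonzero one would correspond to a nonempty $\Pi_1$-stable subset of the transitive set $O$, hence to all of the infinite $O$. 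Finiteness of $\mathrm{cf}(\cal V)$ therefore forces $\cal C_O=0$ for every infinite $O$, so $W$ meets only finite orbits, $W\subseteq\Lambda^f$, and $\delta$ factors through $\cal O_{T^f}$, completing essential surjectivity and hence the proof.
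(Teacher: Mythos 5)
Your skeleton is sound and, at bottom, parallel to the paper's: full faithfulness is easy, and essential surjectivity reduces to showing that the finite set $W$ of geometric weights of $\cal V$ is $\Pi_1(S,\bar s)$-stable, hence contained in $\Lambda^f$. The genuine gap is in the step you yourself flag as the main obstacle: the claim that finite-type subcoalgebras of $\cal O_T$ correspond to finite $\Pi_1(S,\bar s)$-stable subsets of $\Lambda$. There is no ``Cartier-dual form of the SGA3 dictionary'' that delivers this: \cite[X, Th\'eor\`eme 7.1]{SGA3} classifies the groups of multiplicative type themselves (equivalently, Hopf-algebra quotients of $\cal O_T$), not coalgebra-theoretic subobjects of $\cal O_T$, which is a genuinely different and finer question. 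Moreover, as literally stated your claim is false: for split $T$ the \'etale $S$-scheme with fibre $\Lambda$ is $\bigsqcup_{\lambda\in\Lambda}S$, whose \emph{closed} subschemes are the unions $\bigsqcup_{\lambda}Z_\lambda$ of arbitrary closed subschemes $Z_\lambda\subset S$ --- far more than the $\Pi_1$-stable subsets. The correct statement must involve open-and-closed subschemes (unions of connected components), and proving it is exactly where all the work lies.

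That correct statement is in fact true over the connected Noetherian $S$ of this section, but any honest proof of it reproduces the paper's argument, so the coalgebra language buys no shortcut. One must: (i) pass to a splitting torsor $S_1\to S$ under a group $\Gamma$ through which the $\Pi_1$-action factors, and prove $S_1$ is \emph{connected} --- a point the paper establishes explicitly, and which is what rules out ``subcoalgebras'' of the form $I\cdot e_\lambda$ for $I$ a nontrivial idempotent ideal, i.e.\ weight spaces supported on a proper clopen subset of the base; (ii) classify subcoalgebras of $\cal O_{S_1}[\Lambda]$ by hand: applying $\pi_\lambda\otimes\id$ to $\Delta(c)\in\operatorname{im}(C\otimes C)$ shows each graded component of $c\in C$ lies in $C$, so $C=\bigoplus_\lambda I_\lambda e_\lambda$ with $I_\lambda^2=I_\lambda$, and then connectedness plus (local) Noetherianness forces $I_\lambda\in\{0,\cal O_{S_1}\}$; (iii) run the descent-datum computation to see the resulting subset of $\Lambda$ is $\Gamma$-stable. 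Steps (i) and (iii) are verbatim the paper's proof, which works directly with the weight decomposition of $\cal V|_{S_1}$ and the constancy of the ranks of the weight submodules on the connected scheme $S_1$. Note also two secondary unjustified assertions: your decomposition $\mathrm{cf}(\cal V)=\bigoplus_O\cal C_O$ is not automatic (a submodule of a direct sum need not be the sum of its intersections with the summands --- it is a consequence of the classification in (ii), not an independent step), and $\mathrm{cf}(\cal V)_{\bar s}$ need not equal the image of $\mathrm{cf}(\cal V)$ in $(\cal O_T)_{\bar s}$, since $\bar s\to S$ is not flat. So: right reduction, but the load-bearing claim is misstated and unproven, and filling it in lands you back at the paper's proof.
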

\begin{proof}
Since \eqref{eq-isotrivialization} is surjective, the canonical functor $\Vect(S)^{T^f} \rightarrow \Vect(S)^T$ is fully faithful. It remains to prove essential surjectivity, i.e.~the $T$-action on any object $\cal V\in\Vect(S)^T$ factors through $T^f$.

Suppose that the $\Pi_1(S, \bar s)$-action on $\Lambda$ factors through a surjection $\Pi_1(S, \bar s) \twoheadrightarrow \Gamma$ where $\Gamma$ is a group (rather than a pro-group). We then obtain an \'etale $\Gamma$-torsor $S_1\rightarrow S$ rigidified along $\bar s$, i.e.~equipped with a lift $\bar s_1 \rightarrow S_1$ of $\bar s$.

The scheme $S_1$ is connected. Otherwise, we write $S_1'$ for the connected component containing $\bar s_1$. It is an \'etale $\Gamma'$-torsor for the subgroup $\Gamma'\subset\Gamma$ preserving $S_1'$. Furthermore, there is a canonical isomorphism $S_1'\times^{\Gamma'}\Gamma \cong S_1$, showing that $S_1$ is induced along $\Gamma'\subset\Gamma$ which contradicts the surjectivity of $\Pi_1(S, \bar s) \twoheadrightarrow\Gamma$.

By construction, the torus $T_1 := T\times_S S_1$ splits and there is a unique isomorphism:
\begin{equation}
\label{eq-splitting-torus-character}
\Hom(T_1, \mathbb G_{m, S_1}) \cong \Lambda,
\end{equation}
extending the isomorphism over $\bar s_1$. Thus, the base change $\cal V_1$ of $\cal V$ along $S_1\rightarrow S$ acquires a $\Lambda$-grading by $T_1$-weight submodules:
\begin{equation}
\label{eq-grading-inverse-image-of-representation}
	\cal V_1 \cong \bigoplus_{\lambda\in\Lambda} (\cal V_1)^{\lambda}.
\end{equation}
Since $\cal V_1$ is finite locally free, $(\cal V_1)^{\lambda} = 0$ for all but finitely many $\lambda$ and the rank of $(\cal V_1)^{\lambda}$ is constant along $S_1$ by connectedness.

The descent datum of $T_1$ gives rise to an isomorphism $T_{1, \bar s_1} \cong T_{1, \gamma(\bar s_1)}$ for all $\gamma\in\Gamma$. Under \eqref{eq-splitting-torus-character}, this isomorphism passes to the action map $\gamma : \Lambda\rightarrow\Lambda$. The descent datum of $\cal V_1$ as a $T_1$-representation gives rise to an isomorphism $\cal V_{1, \bar s_1} \cong \cal V_{1, \gamma(\bar s_1)}$ under which the weight-$\lambda$ submodule of $\cal V_{1, \bar s_1}$ corresponds to the weight-$\gamma(\lambda)$ submodule of $\cal V_{1, \gamma(\bar s_1)}$.

In summary, we find:
$$
(\cal V_1)_{\bar s_1}^{\lambda}\neq 0 \Leftrightarrow (\cal V_1)_{\gamma(\bar s_1)}^{\gamma(\lambda)} \neq 0 \Leftrightarrow (\cal V_1)_{\bar s_1}^{\gamma(\lambda)} \neq 0.
$$
Thus, if $(\cal V_1)^{\lambda} \neq 0$, the $\Gamma$-orbit of $\lambda$ is necessarily finite, i.e.~$\lambda\in\Lambda^f$.

The above argument shows that the $T_1$-action on $\cal V_1$ factors through $T_1^f$. This implies the same assertion about $\cal V$ since it is of \'etale local nature.
\end{proof}

\begin{void}
Let us illustrate this observation with Grothendieck's example of a non-isotrivial torus (\cite[X, \S1.6, Exemple 7.3]{SGA3}). We work over an algebraically closed field $k = \bar k$ and let $S := \mathbb A^1\sqcup_{\{0,1\}}\{0\}$ be the nodal cubic.

Since $\Pi_1(S, \bar s) \cong \mathbb Z$, its action on $\mathbb Z^{\oplus 2}$ by $a\cdot (x, y) = (x + a y, y)$ defines a rank-$2$ torus $T$ as a self-extension of $\mathbb G_m$:
\begin{equation}
\label{eq-nonisotrivial-torus}
1 \rightarrow \mathbb G_m \rightarrow T \rightarrow \mathbb G_m \rightarrow 1.
\end{equation}
The morphism \eqref{eq-isotrivialization} corresponds to the quotient morphism $T\twoheadrightarrow\mathbb G_m$ in \eqref{eq-nonisotrivial-torus}. Lemma \ref{lem-representation-isotrivialization} asserts that $T$-equivariant objects in $\Vect(S)$ are induced from $\mathbb G_m$-equivariant ones.
\end{void}

\begin{sect}
\label{sect-identification-tannaka}
Identification of $\underline{\Aut}^{\otimes}(\omega)$
\end{sect}

\begin{void}
Let $G \rightarrow S$ be a reductive group scheme. Specializing \eqref{eq-isotrivialization} to $\Rad(G)$, we obtain a surjection of tori $\Rad(G) \twoheadrightarrow \Rad(G)^f$.

Denote by $G^f$ the push-out of $G$ along this morphism:
\begin{equation}
\label{eq-isotrivialization-radical-torus-reductive}
G \twoheadrightarrow G^f.
\end{equation}
In other words, $G^f$ is the quotient of $G$ by the kernel $T_0$ of the map $\Rad(G) \twoheadrightarrow \Rad(G)^f$. Since $T_0$ is of multiplicative type and contained in the center of $G$, the quotient $G^f$ is representable by a reductive group scheme (\cite[Corollary 3.3.5]{MR3362641}) whose radical torus is identified with $\Rad(G)^f$.
\end{void}

\begin{lem}
\label{lem-representation-isotrivialization-reductive}
Pulling back along \eqref{eq-isotrivialization-radical-torus-reductive} defines an equivalence of categories:
\begin{equation}
\label{eq-representation-isotrivialization-reductive}
	\Vect(S)^{G^f} \cong \Vect(S)^G.
\end{equation}
\end{lem}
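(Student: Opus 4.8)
The plan is to mirror the two-step structure of Lemma \ref{lem-representation-isotrivialization}, reducing the reductive case to the torus case treated there. Since \eqref{eq-isotrivialization-radical-torus-reductive} is surjective, pullback along it is automatically fully faithful, exactly as in the torus argument; so the whole content lies in essential surjectivity, i.e.~in showing that every $\cal V\in\Vect(S)^G$ descends to $\Vect(S)^{G^f}$.

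First I would reinterpret the image of the pullback functor. Recall that $G^f = G/T_0$ with $T_0 := \Ker\big(\Rad(G)\twoheadrightarrow\Rad(G)^f\big)$ a central subgroup of multiplicative type, hence flat over $S$; consequently $G\rightarrow G^f$ is an fppf $T_0$-torsor and the induced inclusion of Hopf algebras $\cal O_{G^f}\hookrightarrow\cal O_G$ identifies $\cal O_{G^f}$ with the equalizer of the two coactions $\cal O_G\rightrightarrows\cal O_G\otimes_R\cal O_{T_0}$ (faithfully flat descent for the torsor, as in Lemma \ref{lem-inheritance-resolution-subgroup}). A short computation with coassociativity then shows that, for $\cal V\in\Vect(S)^G$ with coaction $\rho:\cal V\rightarrow\cal V\otimes_R\cal O_G$, the image of $\rho$ lands in $\cal V\otimes_R\cal O_{G^f}$ precisely when the restricted $T_0$-coaction is trivial. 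In other words, $\Vect(S)^{G^f}$ is the full subcategory of $\Vect(S)^G$ on which $T_0$ acts trivially.

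With this reformulation, essential surjectivity reduces to the claim that $T_0$ acts trivially on every $\cal V\in\Vect(S)^G$. Here I would simply restrict the $G$-action to the central radical torus: any such $\cal V$ is in particular an object of $\Vect(S)^{\Rad(G)}$. Applying Lemma \ref{lem-representation-isotrivialization} to the torus $\Rad(G)$ with maximal isotrivial quotient $\Rad(G)^f$, the $\Rad(G)$-action on $\cal V$ factors through $\Rad(G)^f$; equivalently, its kernel $T_0$ acts trivially. This is exactly what is required.

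The genuine mathematical work therefore sits entirely in the torus Lemma \ref{lem-representation-isotrivialization}, whose weight-decomposition argument over a connected \'etale cover supplies the triviality of the $T_0$-action; the reductive statement is formal once this is invoked. Accordingly, the only step demanding care is the descent reformulation of the second paragraph, namely verifying that $T_0$ is genuinely central and flat (so that $G\rightarrow G^f$ is an fppf torsor) and that the coassociativity computation identifying $\Vect(S)^{G^f}$ with the trivial-$T_0$ subcategory is valid integrally over $S$, without passing to fibers. Both are ensured by the structural properties of $T_0$ recorded immediately before the statement.
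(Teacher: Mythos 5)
Your proof is correct and takes essentially the same route as the paper's: full faithfulness from surjectivity of $G\twoheadrightarrow G^f$, then essential surjectivity by restricting the action to $\Rad(G)$ and invoking the torus case, Lemma \ref{lem-representation-isotrivialization}. The paper's own proof is simply a terser version of yours---it leaves implicit the descent identification of $\Vect(S)^{G^f}$ with the full subcategory of $\Vect(S)^G$ on which $T_0$ acts trivially, which you spell out (correctly) in your second paragraph.
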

\begin{proof}
Since $G\twoheadrightarrow G^f$ is surjective, the functor $\Vect(S)^{G^f} \rightarrow \Vect(S)^G$ is fully faithful. It suffices to show essential surjectivity, i.e.~the $G$-action on any $\cal V\in\Vect(S)^G$ factors through $G^f$. This statement follows from Lemma \ref{lem-representation-isotrivialization}.
\end{proof}

\begin{thm}
\label{thm-identification-tannaka-group}
Let $S$ be an affine connected Noetherian scheme and $G \rightarrow S$ be a reductive group scheme. There is an isomorphism of affine group schemes under $G$:
\begin{equation}
\label{eq-identification-tannaka-group}
G^f \cong \underline{\Aut}^{\otimes}(\omega).
\end{equation}
\end{thm}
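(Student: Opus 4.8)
The plan is to transport the Tannakian reconstruction theorem, which already holds for $G^f$, across the equivalence of categories furnished by Lemma \ref{lem-representation-isotrivialization-reductive}. First I would observe that $G^f$ is reductive with radical torus $\Rad(G)^f$, which is isotrivial by its very construction. Hence the implication (4) $\Rightarrow$ (2) of Theorem \ref{thm-reconstruction-criterion} applies to $G^f$: writing $\omega^f : \Vect(S)^{G^f}\rightarrow\Vect(S)$ for the forgetful functor, the canonical map \eqref{eq-tannakian-unit} for $G^f$ is an isomorphism
\begin{equation*}
G^f \cong \underline{\Aut}^{\otimes}(\omega^f).
\end{equation*}

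Next I would examine the pullback functor $\Phi : \Vect(S)^{G^f}\rightarrow\Vect(S)^G$ along the quotient \eqref{eq-isotrivialization-radical-torus-reductive}, which Lemma \ref{lem-representation-isotrivialization-reductive} identifies as an equivalence. Since pulling back a representation leaves the underlying vector bundle unchanged, carries the diagonal $G^f$-action on a tensor product to the diagonal $G$-action, and sends the unit object to the unit object, $\Phi$ is a symmetric monoidal equivalence satisfying $\omega\circ\Phi = \omega^f$. The description $\underline{\Aut}^{\otimes}(\omega)\cong\Spec(\coend(\omega^{\vee}\otimes_R\omega))$ shows that the Tannaka group scheme depends only on the symmetric monoidal category together with its fiber functor; consequently $\Phi$ induces a canonical isomorphism $\underline{\Aut}^{\otimes}(\omega)\cong\underline{\Aut}^{\otimes}(\omega^f)$. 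Composing this with the previous display yields the desired isomorphism $G^f\cong\underline{\Aut}^{\otimes}(\omega)$.

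Finally I would check that this isomorphism is one of group schemes under $G$, i.e.~that precomposing with the quotient \eqref{eq-isotrivialization-radical-torus-reductive} recovers the canonical map \eqref{eq-tannakian-unit} for $G$. This amounts to the observation that, for $\cal W\in\Vect(S)^{G^f}$, a point $g$ of $G$ acts on $\Phi(\cal W)$ precisely as its image in $G^f$ acts on $\cal W$; hence the Tannakian map for $G$ is the composite of $G\twoheadrightarrow G^f$ with the Tannakian map for $G^f$, transported along $\underline{\Aut}^{\otimes}(\omega^f)\cong\underline{\Aut}^{\otimes}(\omega)$. Once Lemma \ref{lem-representation-isotrivialization-reductive} and Theorem \ref{thm-reconstruction-criterion} are in hand, the argument is essentially formal; the point demanding the most care is the functoriality of $\underline{\Aut}^{\otimes}(-)$ under symmetric monoidal equivalences commuting with fiber functors, together with its compatibility with the canonical maps from $G$ and $G^f$.
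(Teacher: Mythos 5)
Your proposal is correct and is essentially the paper's own argument: the paper likewise applies Theorem \ref{thm-reconstruction-criterion} to $G^f$ (whose radical torus $\Rad(G)^f$ is isotrivial by construction) and uses Lemma \ref{lem-representation-isotrivialization-reductive} to identify $\underline{\Aut}^{\otimes}(\omega)$ with $\underline{\Aut}^{\otimes}(\omega^f)$, packaging the whole thing—including your final compatibility check ``under $G$''—as the naturality of the map \eqref{eq-tannakian-unit} in a commutative square relating $G$, $G^f$, and the two Tannaka groups. Your more explicit verification that precomposition with $G\twoheadrightarrow G^f$ recovers the Tannakian map for $G$ is exactly the commutativity of that square, so the two proofs coincide in substance.
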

\begin{proof}
Let $\omega^f$ denote the symmetric monoidal functor $\Vect(S)^{G^f} \rightarrow \Vect(S)$. The naturality of \eqref{eq-tannakian-unit} yields a commutative diagram of affine group schemes:
$$
\begin{tikzcd}
	G \ar[d, twoheadrightarrow]\ar[r] & \underline{\Aut}^{\otimes}(\omega) \ar[d] \\
	G^f \ar[r] & \underline{\Aut}^{\otimes}(\omega^f)
\end{tikzcd}
$$
Lemma \ref{lem-representation-isotrivialization-reductive} shows that the right vertical arrow is an isomorphism. Theorem \ref{thm-reconstruction-criterion} shows that the bottom horizontal arrow is an isomorphism, since $\Rad(G)^f$ is isotrivial. The isomorphism \eqref{eq-identification-tannaka-group} thus follows.
\end{proof}

\bibliographystyle{amsalpha}
\bibliography{../biblio_mathscinet.bib}

\end{document}